\font\goth=eufm10
\newcommand{\gc}{\hbox{\goth c}}
\newcommand{\emp}{\emptyset}
\newcommand{\ben}{\mathbb N}
\newcommand{\ber}{\mathbb R}
\newcommand{\bec}{\mathbb C}
\newcommand{\bed}{\mathbb D}
\newcommand{\beg}{\mathbb G}
\newcommand{\beq}{\mathbb Q}
\newcommand{\bez}{\mathbb Z}
\newcommand{\nhat}[1]{\{1,2,\ldots,#1\}}
\newcommand{\pf}{{\mathcal P}_f}
\newcommand{\defn}{\emph}
\newcommand{\vvdots}[1]{\mathrel{\makebox[\widthof{\ensuremath{#1}}]{\vdots}}}
\newcommand{\vvvdots}[1]{\makebox[\widthof{\ensuremath{#1}}]{\vdots}}
\newcommand{\invis}[1]{\mathrel{\makebox[\widthof{\ensuremath{#1}}]{ }}}
\newcommand{\hslemmas}[1]{Lemma~\ref{hslemmas}({\ref{#1}})}
\newtheorem{theorem}{Theorem}[section]
\newtheorem{lemma}[theorem]{Lemma}
\newtheorem{proposition}[theorem]{Proposition}
\newtheorem{question}[theorem]{Question}
\theoremstyle{definition}
\newtheorem{definition}[theorem]{Definition}
\title{Distinguishing subgroups of the rationals by their Ramsey properties}
\author{Ben Barber
        \footnote{School of Mathematics,
                  University of Birmingham,
                  Edgbaston,
                  Birmingham, B15 2TT, UK.\hfill\break
                  {\tt b.a.barber@bham.ac.uk}}
        \and
        Neil Hindman
        \footnote{Department of Mathematics,
                  Howard University,
                  Washington, DC 20059, USA.\hfill\break
                  {\tt nhindman@aol.com}}
        \thanks{This author acknowledges support received from the National
                Science Foundation (USA) via Grant DMS-1160566.}
        \and
        Imre Leader
        \footnote{Department of Pure Mathematics and Mathematical Statistics,
                  Centre for Mathematical Sciences,
                  Wilberforce Road, Cambridge, CB3 0WB, UK.\hfill\break
                  {\tt i.leader@dpmms.cam.ac.uk}}
        \and
        Dona Strauss
        \footnote{Department of Pure Mathematics,
                  University of Leeds, Leeds LS2 9J2, UK.\hfill\break
                  {\tt d.strauss@hull.ac.uk}}                 
}
\begin{document}

\maketitle
\begin{abstract}
A system of linear equations with integer coefficients is 
{\it partition regular\/}
over a subset $S$ of the reals if, whenever $S\setminus\{0\}$ is finitely 
coloured, there is a solution to the system contained in one colour class. 
It has been known for some time that there is an infinite system
of linear equations that is partition regular over $\ber$ but not over $\beq$,
and it was recently shown (answering a long-standing open question) that one
can also distinguish $\beq$ from $\bez$ in this way.

Our aim is to show that the transition from $\bez$ to $\beq$ is not
sharp: there is an infinite chain of subgroups of $\beq$, each of which has a 
system 
that is partition regular over it but not over its predecessors. We actually
prove something stronger: our main result is that if $R$ and $S$ are subrings 
of $\beq$ with
$R$ not contained in $S$, then there is a system that is partition regular
over $R$ but not over $S$. This implies, for example, that the chain above may 
be taken to be uncountable.
\end{abstract}

\section{Introduction}

Consider a system of $u$ linear equations in $v$ unknowns.
\begin{align*}
 a_{1,1} x_1 + a_{1,2} x_2 + \cdots + a_{1,v} x_v &= 0 \\
 a_{2,1} x_1 + a_{2,2} x_2 + \cdots + a_{2,v} x_v &= 0 \\
 \vvvdots{a_{2,1} x_1} \invis{+} \vvvdots{a_{2,2} x_2} \invis{+}\! \ddots\! \invis{+} \vvvdots{a_{2,v} x_v} & \invis{=} \vvvdots{0} \\
 a_{u,1} x_1 + a_{u,2} x_2 + \cdots + a_{u,v} x_v &= 0 \\
\end{align*}
If the coefficients are rational
numbers and the set $\ben$ of positive integers is finitely coloured,
can one be guaranteed to find monochromatic $x_1,x_2,\ldots,x_v$
solving the given system? That is, is the system of equations
{\it partition regular\/}? In \cite{R}, Rado answered this 
question, showing that the system is partition regular if and only 
if the matrix of coefficients 
\[
A = \begin{pmatrix}
        a_{1,1} & a_{1,2} & \cdots & a_{1,v} \\
        a_{2,1} & a_{2,2} & \cdots & a_{2,v} \\
        \vdots  & \vdots  & \ddots & \vdots  \\
        a_{u,1} & a_{u,2} & \cdots & a_{u,v}
    \end{pmatrix}
\]
satisfies the \emph{columns condition}:

\begin{definition} \label{defcc}
Let $u,v\in\ben$ 
and let $A$ be a $u\times v$ matrix with entries 
from $\bez$.  Denote the columns of $A$ by $\langle\vec c_i\rangle_{i=1}^{v}$.
  The matrix $A$ satisfies the {\it columns condition \/} if
there exist $m\in\nhat{v}$ and a partition $\langle I_t\rangle_{t=1}^{m}$ of 
$\nhat{v}$ such that
\begin{enumerate}[(1)]
\item $\sum_{i\in I_1}\vec c_i=\vec 0$;
\item for each $t\in \{2,3,\ldots,m\}$, if any, 
$\sum_{i\in I_t}\vec c_i$ is a linear combination with coefficients
from $\beq$ of $\{\vec c_i:i\in\bigcup_{j=1}^{t-1}I_j\}$.
\end{enumerate}
\end{definition}

If one considers the same equations over $\ber$, an easy compactness argument
shows that a finite system of equations is partition regular
over the reals if and only if it is partition regular over the integers.

Note that the restriction to integer coefficients might as
well be to rational coefficients, as we are always free to multiply each
equation by a constant. We remark in passing that if one were to allow
coefficients that are not rational, then the situation for finite systems
is again understood: in \cite{Rb}, Rado extended his result by showing that if 
$R$ is any subring
of the set $\bec$ of complex numbers and the entries of $A$ are from $R$,
then the system of equations is partition regular over $R$ if and only if
the matrix $A$ satisfies the columns condition over the field $F$ generated by $R$
(which means that we replace `linear combination with coefficients from
$\beq$' by `linear combination with coefficients from $F$').

So the partition regularity of finite systems is quite settled.  The case
with {\it infinite} systems of linear equations, however, is much
harder, and in general is still poorly understood. There is by now a large 
literature on the subject (see the survey \cite{H}), 
but there is nothing resembling a characterisation of those infinite 
systems that are partition regular over $\bez$, $\beq$, or any other interesting subset of $\bec$.  

One difference between finite and infinite systems is the main focus of this
paper. As stated above, if a finite system of linear equations has rational coefficients,
it is a consequence of Rado's original theorems that the system is
partition regular over $\ben$ if and only if it is partition regular over $\ber$ (and thus
if and only if it is partition regular over $\bez$ or over $\beq$).

It was shown in \cite{HLS} that the infinite system of equations 
$y_n=x_n-x_{n+1}$ ($n = 0,1,2,\ldots$) is partition regular over $\ber$ but not over $\beq$.  It 
was an open problem for some time whether every system of 
linear equations with rational coefficients that is partition regular
over $\beq$ must also be partition regular over $\ben$.  (We remark in passing
that there is no difference between $\ben$ and $\bez$ in this regard, because
if a system has a bad $k$-colouring over $\ben$ then it also has a bad 
$2k$-colouring over $\bez$, obtained by copying the colouring of $\ben$ to
the negative integers but using $k$ new colours---so we switch freely between
$\ben$ and $\bez$ in this paper.) 

This question was
answered in the negative in \cite[Theorem~12]{BHL} by showing that the
following system of equations is partition regular over $\bed$, the set
of dyadic rationals. (It is not partition regular over $\ben$ because it
has no solutions in $\ben$ at all.)
\begin{align*}   
                    x_{1,1} + 2^{-1}y & = z_{1,1} \\
          x_{2,1} + x_{2,2} + 2^{-2}y & = z_{2,1} + z_{2,2} \\
                                      & \vvdots{=} \\
 x_{n,1} + \cdots + x_{n,n} + 2^{-n}y & = z_{n,1} + \cdots + z_{n,n}\\
                                      & \vvdots{=}
\end{align*}

In this paper we extend this result by considering the following system of
equations, which is a generalisation of another system introduced in 
\cite{BHL}.  Let $\alpha\in\ben$
and, for $n \geq 2$ and $1 \leq i \leq \alpha$, let $d_{n,i}$
be an element of some infinite ring $R$. (We take rings to have identities.)

\medskip\noindent
{\bf System~$(*)$}:
\vskip -20 pt

\begin{align*}
 x_{2,1} + x_{2,2}+d_{2,1}y_1+d_{2,2}y_2+\cdots+d_{2,\alpha}y_\alpha & = z_2 \\
 x_{3,1} + x_{3,2} + x_{3,3}+d_{3,1}y_1+d_{3,2}y_2+\cdots+d_{3,\alpha}y_\alpha & = z_3 \\
                                      & \vvdots{=} \\
x_{n,1} + \cdots + x_{n,n}+d_{n,1}y_1+d_{n,2}y_2+\cdots+d_{n,\alpha}y_\alpha& = z_n\\
& \vvdots{=}
\end{align*}

In Section~\ref{general} we prove (Theorem \ref{dspr}) that, 
if $R$ satisfies a certain
technical condition, then System $(*)$ is partition regular over $R$.  (This 
technical condition is satisfied by all subrings of $\beq$.)
We actually show that System~$(*)$ satisfies a slightly stronger condition:
it is strongly partition regular over $R$.  

\begin{definition} \label{defspr} 
Let $R$ be a ring. A system of linear 
equations (with coefficients in $R$) is \defn{strongly partition regular over $R$} if, whenever
$R$ is finitely coloured, there exists a monochromatic solution to the system
with distinct variables taking on different values.
\end{definition}

[This is the reason for starting System~$(*)$ at $n=2$.  If we include the equation 
for $n=1$, then the system remains partition regular, but we cannot ensure that $x_{1,1}$ 
and $z_1$ receive different colours: consider the case where $d_{1,1} = d_{1,2} = \cdots d_{1,\alpha} = 0$.]

In Section~\ref{applications} we apply the results of Section~\ref{general} to show that 
there is an infinite increasing sequence $\langle G_n\rangle_{n=1}^\infty$ of 
subgroups of $\beq$ with the property that, for each $n$, there is a 
choice of the coefficients $\langle d_{n,i}\rangle_{n=1}^\infty$ 
making System~$(*)$ strongly partition
regular over $G_{n+1}$ while it is not partition regular over 
$G_n$. We actually prove rather more (Theorem \ref{onenotother}): 
this separation property holds for any
two subrings of the rationals. This means that, for example, there is even an
uncountable chain with this property. We close with some open problems.

The results of Section~\ref{general} make substantial use of the algebraic structure of the
Stone--\v Cech compactification of a discrete semigroup, which we 
briefly introduce in Section~\ref{betaS}.

\section{The Stone--\v Cech compactification} \label{betaS}

Let $S$ be a semigroup.  We shall be concerned here exclusively with
commutative semigroups, so we will denote the operation of $S$ by $+$.  For
proofs of the assertions made here, see the first five chapters of \cite{HS}.

The \emph{Stone--\v Cech compactification} of $S$ is denoted by $\beta S$.  The points of $\beta S$ are the ultrafilters on $S$.  We identify the principal
ultrafilters with the points of $S$, whereby we pretend that $S \subseteq \beta
S$.
The operation on $S$ extends to an operation on $\beta S$, also
denoted by $+$, with the property that, for $x \in S$ and $q \in \beta S$, the functions
\begin{align*}
 p & \mapsto x + p \qquad \text{ and} \\
 q & \mapsto p + q
\end{align*}
are continuous.
(The reader should be cautioned that $(\beta S,+)$ is almost 
certain to be non-commutative: the centre of $(\beta\ben,+)$ is
$\ben$.) Given $A\subseteq S$ and $p,q\in\beta S$, $A\in p+q$
if and only if $\{x\in S:-x+A\in q\}\in p$, where $-x+A=\{y\in S:x+y\in A\}$.

With this operation, $(\beta S,+)$ is a compact Hausdorff right topological
semigroup.  Any such object contains idempotents, points $p$ such that $p = p +
p$.  The semigroup $\beta S$ has a smallest two-sided ideal, $K(\beta S)$,
which is the union of all of the minimal right ideals of $\beta S$ as well as
the union of all of the minimal left ideals of $\beta S$.  The intersection of
any minimal right ideal with any minimal left ideal is a group (and any two
such groups are isomorphic).  In particular, there are idempotents in $K(\beta
S)$---such idempotents are called {\it minimal\/}. 

A subset $A$ of $S$ is
\begin{itemize}
\item an \emph{IP-set} if it is a member of some idempotent;
\item \emph{central} if it is a member of some minimal idempotent;
\item \emph{central*} if it is a member of every minimal idempotent;
\item an \emph{IP*-set} if it is a member of every idempotent.
\end{itemize}
Equivalently, $A$ is an IP*-set if, whenever $\langle x_n\rangle _{n=1}^\infty$ is
a sequence in $S$, there exists $F\in \pf(\ben)$ such that $\sum_{n\in
F}x_n\in A$.  We will use this to show that certain sets are central*.

We will also require the following more specialised results from \cite{HS}.

\begin{lemma} \label{hslemmas}
\begin{enumerate}[{\rm (a)}]
\item \label{non-principal}
Let $G$ be a commutative group.  Then every minimal idempotent in $\beta G$ is non-principal.
\item \label{star}
Let $S$ be a semigroup,
let $p$ be an idempotent in $\beta S$ and, for $C \in p$, let $C^\star = \{s \in S : -s + C \in p \}$.  Then $C^\star \in p$ and, for each $s \in C^\star$, 
we have $-s + C^\star \in p$.
\end{enumerate}
\end{lemma}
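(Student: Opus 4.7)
My plan is to handle the two parts separately; each is essentially a direct manipulation of the definition of addition in $\beta S$.

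For part~(a), read with the tacit assumption that $G$ is infinite (otherwise $\beta G = G$ and $0$ is a principal minimal idempotent), the idea is to prove that $G^\ast := \beta G\setminus G$ is a two-sided ideal of $\beta G$; the conclusion would then follow since $K(\beta G)\subseteq G^\ast$ forces every minimal idempotent to be non-principal. To establish that $G^\ast$ is an ideal I would take a non-principal $p$ and arbitrary $q\in\beta G$ and suppose for contradiction that $p+q$ is principal at some $h\in G$. Unwinding the definition, $\{h\}\in p+q$ gives $\{s\in G:-s+\{h\}\in q\}\in p$. Because $G$ is a group, $-s+\{h\}=\{h-s\}$ is a singleton, so $\{h-s\}\in q$ would force $q$ to be principal at $h-s$; if $q$ is non-principal this set is empty, while if $q$ is principal at $k$ it is the singleton $\{h-k\}$, and in either case it cannot belong to the non-principal ultrafilter $p$, a contradiction. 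A symmetric argument will handle $q+p$.

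For part~(b), the first assertion $C^\star\in p$ is immediate: the hypothesis $p=p+p$ means $C\in p+p$, which by the definition of $+$ on $\beta S$ is precisely $\{s:-s+C\in p\}=C^\star\in p$. For the second assertion I will exploit the identity $-t+(-s+C)=-(s+t)+C$, which is forced by the definition of translation. Fixing $s\in C^\star$ and setting $D:=-s+C$, which belongs to $p$ by definition of $C^\star$, applying the first half of the argument to $D$ in place of $C$ yields $D^\star=\{t:-t+D\in p\}\in p$; the identity then gives $D^\star=\{t:-(s+t)+C\in p\}=\{t:s+t\in C^\star\}=-s+C^\star$, which is what is wanted.

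The only even mildly substantive step is the verification in~(a) that $G^\ast$ is an ideal; everything else amounts to bookkeeping, and the main obstacle I anticipate is just tracking the asymmetric definition of $+$ on $\beta S$ carefully enough to avoid sign errors.
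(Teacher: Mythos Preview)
Your argument is correct and follows essentially the same line as the paper's: for (a) both proofs hinge on the fact that $\beta G\setminus G$ is an ideal (the paper cites this as Corollary~4.33 of \cite{HS}, while you prove it directly), and for (b) the paper simply cites Lemma~4.14 of \cite{HS}, whose proof is exactly the definition-unwinding you wrote out. Your explicit flag that (a) tacitly assumes $G$ is infinite is well taken.
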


\begin{proof}
(a) Let $p$ be a principal ultrafilter.  Then $p$ is idempotent if and only if $p+p=p$ in $G$, 
that is, if and only if $p=0$.  Suppose that $0$ were a minimal idempotent.  Then by Theorem~1.48 of \cite{HS}, 
$\beta S + 0 = \beta S$ is a minimal left ideal.  But by Corollary~4.33 of \cite{HS}, 
$\beta S \setminus S$ is a left ideal, contradicting the minimality of $\beta S$.

(b) Lemma 4.14 (and preceding discussion) of \cite{HS}.
\end{proof}

\section{General results}\label{general}

In this section we will show that System $(*)$, with coefficients $d_{n,i}$ in some infinite ring $R$, is
strongly partition regular over $R$.  In fact, we shall establish a stronger 
conclusion.

\begin{definition} \label{centrallypr} 
Let $(S,+)$ be a semigroup.  
\begin{itemize}
\item A system of linear equations is \defn{centrally partition regular over $S$} if, 
whenever $A$ is a central subset of $S$, there exists a solution to the system contained in $A$.

\item A system of linear equations is \defn{strongly centrally partition regular over $S$} if, 
whenever $A$ is a central subset of $S$, there exists a solution to the system contained in $A$
with distinct variables taking on different values.
\end{itemize}
\end{definition}

Notice that, since whenever a semigroup is finitely coloured, one colour class must be 
central, it follows that, if a system of equations is strongly centrally partition regular, then it is
strongly partition regular.

We use the usual additive notation
\begin{align*}
 A + B & = \{a + b : a \in A, b \in B\} \\
 A - B & = \{a - b : a \in A, b \in B\} \\
    kA & = A + \cdots + A \quad (k \text{ times})
\end{align*}
and write
$k\cdot A=\{k\cdot a:a\in A\}$.

We shall need the following result from \cite{BHLS}.

\begin{lemma} \label{mcdotg}  
Let $(G,+)$ be a commutative group and assume that
$c\cdot G$ is a central* set for each $c\in\ben$.  Let $C$ be a 
central subset of $G$.  Then there is an $m \in \ben$ and a $k$ such that,
if $n\geq k$, then $m\cdot G\subseteq C-nC$.
\end{lemma}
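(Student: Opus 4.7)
The plan is to work inside the Stone--\v Cech compactification $\beta G$. Fix a minimal idempotent $p \in \beta G$ with $C \in p$, which exists because $C$ is central. By Lemma~\hslemmas{star}, the refined set $C^\star = \{s \in C : -s + C \in p\}$ lies in $p$ and satisfies $-s + C^\star \in p$ for every $s \in C^\star$. The central* hypothesis guarantees $c\cdot G \in p$ for every $c \in \ben$, so each $C^\star \cap c\cdot G$ is in $p$ and in particular non-empty.

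A preliminary observation is that $nC \in p$ for every $n \geq 1$. This follows from the standard fact that, if $A, B \in p$, then $A + B \in p+p$; combined with the idempotency $p+p+\cdots+p = p$, this gives $nC \in p$ by induction on $n$. In particular $C \cap nC \in p$, so many elements of $C$ are realised as $n$-fold sums of elements of $C$.

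The core step is an inductive construction of a sequence $\langle s_j \rangle_{j=1}^\infty$ in $C^\star$ with $s_j \in j!\cdot G$ and $\sum_{i \in F} s_i \in C^\star$ for every non-empty finite $F \subseteq \ben$. At stage $j$ the constraint set is a finite intersection of members of $p$, hence itself in $p$, so $s_j$ may be chosen. For $F \subseteq \{m, m+1, \dots\}$ the sum $s_F = \sum_{i \in F} s_i$ lies in $m \cdot G \cap C^\star$. Given $g \in m \cdot G$, the aim is to realise $mg = s_{F_0} - s_{F_1} - \cdots - s_{F_n}$ for suitable disjoint non-empty $F_0, F_1, \dots, F_n$ with $n$ bounded by a constant $k$ independent of $g$; this would exhibit $mg \in C - nC$.

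The main obstacle is uniformity: the bound $k$ must not grow with $g$. Here the central* hypothesis becomes decisive, for it forces the image of $p$ under the quotient map $G \to G/(m \cdot G)$ to be the principal ultrafilter at $0$, effectively collapsing all cosets of $m \cdot G$ to a single point as seen by $p$. A pigeonhole argument among the sums $s_F$ and the translates $mg + s_F$ across cosets, combined with this collapse, should provide the required uniform bound. Supplying this uniformity rigorously --- and verifying that $k$ depends only on $C$ and the chosen $m$ --- is the technical heart of the argument.
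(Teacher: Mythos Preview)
The paper does not prove this lemma at all: its entire proof is the citation ``\cite[Lemma 3.7]{BHLS}''. So there is nothing to compare against except whether your argument stands on its own.

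It does not. You explicitly concede that ``supplying this uniformity rigorously \ldots\ is the technical heart of the argument'', and indeed the sketch before that point cannot be completed as written. The difficulty is not merely technical. You construct a \emph{single} sequence $\langle s_j\rangle$ in advance and then, for an arbitrary $g\in m\cdot G$, hope to realise $mg$ (or $g$) as $s_{F_0}-s_{F_1}-\cdots-s_{F_n}$ with the $F_i$ finite subsets of $\ben$. But the collection of all such signed sums is a fixed countable subset of $G$ determined once the $s_j$ are chosen; there is no mechanism whatsoever tying it to an \emph{arbitrary} element of $m\cdot G$. The ``pigeonhole across cosets'' you invoke is empty: for $F\subseteq\{m,m+1,\ldots\}$ every $s_F$ and every $mg+s_F$ already lies in $m\cdot G$, so they all sit in the same coset and pigeonhole yields nothing. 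Being in the same coset of $m\cdot G$ is very far from being equal.

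What your outline is missing is the one genuinely structural consequence of \emph{minimality} of $p$ (as opposed to mere idempotence): the set $D=\{g\in G:-g+C\in p\}$ is \emph{syndetic}, i.e.\ finitely many translates of $D$ cover $G$. You do have the useful ingredients $nC\in p$ and $C^\star\in p$, and combining $-g+C\in p$ with $nC\in p$ gives $g\in C-nC$ for every $g\in D$ and every $n$; this is the right local step. The global step --- passing from $D$ to all of $m\cdot G$ with a uniform $k$ --- comes from syndeticity of $D$ together with a further argument handling the finitely many translating elements, and this is where the choice of $m$ enters. Your IP-sequence construction does not touch this, and no amount of pigeonhole on a pre-built sequence will substitute for it.
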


\begin{proof}
\cite[Lemma 3.7]{BHLS}.
\end{proof}


\begin{definition}
Let $A$ be a $u\times v$ matrix with entries from a ring $R$.  
An element $a_{i,j}$ of $A$ is a {\it first entry\/} of $A$ if $a_{i,k} = 0$ for $k < j$ 
and $a_{i,j} \neq 0$.  We say that $A$ satisfies the \emph{weak first entries condition} 
if no row of $A$ is $\vec 0$ and if $a_{i,k}$ and $a_{j,k}$ are first entries of $A$, then $a_{i,k}=a_{j,k}$.
\end{definition}

We call this the \emph{weak} first entries condition because, as usually defined
with $R=\beq$, one assumes that first entries are positive---which of course
does not make sense for general rings.


\begin{lemma} \label{FECimpliesCIPR}
Let $R$ be an infinite ring.  Let $u, v \in \ben$, let $A$ be a $u \times v$ matrix with entries from $R$ that satisfies the weak first entries condition, and suppose that $c \cdot R$ is central* in $R$ for each first entry $c$ of $A$.  Let $C$ be central in $R$.  Then there is an $\vec x$ in $(R\setminus \{0\})^v$ such that $A \vec x \in C^u$.
\end{lemma}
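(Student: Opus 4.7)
I plan to prove this by induction on $u$, strengthening the conclusion to $A\vec x\in(C^\star)^u$, where $p\in\beta R$ is a minimal idempotent with $C\in p$ and $C^\star=\{s\in R:-s+C\in p\}$. By \hslemmas{star}, $C^\star\in p$ and the closure property $s\in C^\star\Rightarrow -s+C^\star\in p$ holds; by \hslemmas{non-principal}, $p$ is non-principal, so every member of $p$ is infinite. These two facts drive the induction.

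For the base case $u=1$, let the unique row have first entry $c$ at column $j$. Since $c\cdot R\in p$ by hypothesis, $c\cdot R\cap C^\star\in p$ is infinite. Choose $x_k$ arbitrarily from $c\cdot R\setminus\{0\}$ for $k>j$ and from $R\setminus\{0\}$ for $k<j$; by commutativity of $R$ the tail $s=\sum_{k>j}a_{1,k}x_k$ lies in $c\cdot R$. Picking $y\in c\cdot R\cap C^\star$ with $y\ne s$ and solving $cx_j=y-s$ yields a nonzero $x_j\in R$ with $A_1\vec x=y\in C^\star$.

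For the inductive step, reorder rows so that $j_1\le\cdots\le j_u$ and set $j^\star=j_u$, $c^\star=a_{u,j^\star}$. The key idea is: if we can produce $\vec y\in(R\setminus\{0\})^v$ with $A^-\vec y\in(C^\star)^{u-1}$ (where $A^-$ is $A$ with row $u$ deleted, which inherits the weak first entries condition with all first entries in columns $\le j^\star$) \emph{and} with the tail $\tau=\sum_{k>j^\star}a_{u,k}y_k\in C^\star$, then the row-$u$ equation is easy: the closure property gives $-\tau+C^\star\in p$, the central$^\star$ hypothesis gives $c^\star\cdot R\in p$, so $(-\tau+C^\star)\cap c^\star\cdot R\in p$ is infinite; choose $c^\star z$ in this intersection with $c^\star z\ne 0$ and substitute $x_{j^\star}=z$. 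Preservation of the equations for rows $i<u$ with $a_{i,j^\star}\ne 0$ is secured by constraining $z$ to a suitable common coset, again via the closure property applied to each such row.

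The main obstacle is arranging the two conditions on $\vec y$ simultaneously. My intended device is to augment $A^-$ by the tail row $T=(0,\ldots,0,a_{u,j^\star+1},\ldots,a_{u,v})$: its first nonzero entry lies strictly beyond every first-entry column of $A^-$, so the augmented matrix still satisfies the weak first entries condition. Two complications arise: the augmented matrix has $u$ rows rather than $u-1$, and the first entry of $T$ is not in general a first entry of $A$, so the central$^\star$ hypothesis does not cover it. I expect to handle both by iteratively peeling $T$---replacing it at each stage by its own tail, strictly increasing the support column of the surviving tail---until the residual tail vanishes and we recover the reduced $(u-1)$-row matrix $A^-$ to which the induction applies; the tail-sum conditions accumulated along the iteration are then reconciled into a single coherent $\vec y$ by repeatedly intersecting the relevant members of $p$ using the closure property of $C^\star$.
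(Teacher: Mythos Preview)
The paper does not argue this directly: it cites \cite[Theorem~15.5]{HS}, whose proof proceeds by induction on the number of \emph{columns} $v$, not on the number of rows $u$. Your choice to induct on $u$ runs into a genuine obstacle that the peeling device does not overcome.

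Concretely, take $A=\begin{pmatrix}1&5\\0&1\end{pmatrix}$, so $j^\star=2$, $c^\star=1$, $A^-=(1\ \ 5)$, and the tail row $T$ is empty (so peeling is vacuous here). Your inductive hypothesis gives $y_1,y_2$ with $y_1+5y_2\in C^\star$; you then wish to replace $y_2$ by some $z$ with $z\in C$ (to satisfy row~$2$) while keeping $y_1+5z\in C$ (row~$1$). The latter is a constraint on $5z$, the former on $z$; to combine them you would need something like $\{z:5z\in -y_1+C\}\in p$, but $5$ is not a first entry of $A$, you have no central$^\star$ hypothesis for it, and the closure property of $C^\star$ says nothing about such preimages. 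Exactly the same failure occurs in general whenever some row $i<u$ has $j_i<j^\star$ yet $a_{i,j^\star}\neq 0$, and it also blocks your peeling scheme one level up: each peeled tail of $T$ has leading coefficient $a_{u,k}$ with $k>j^\star$, again not a first entry of $A$, so you cannot force the successive partial tail-sums into $C^\star$. The remedy is to induct on $v$ instead: delete the leftmost column, apply the inductive hypothesis to the remaining matrix (discarding any rows that become identically zero) to obtain row-sums $s_i\in C^\star$, and then choose $x_1$. Only rows with first entry in column~$1$ are affected, they all carry the \emph{same} coefficient $c$, and one needs only $cx_1\in c\cdot R\cap\bigcap_i(-s_i+C^\star)\in p$, which the closure property of $C^\star$ and the central$^\star$ hypothesis for $c$ deliver at once.
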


\begin{proof}
This is a special case of \cite[Theorem~15.5]{HS}.  That theorem was stated only for coefficients 
which were natural numbers so that it made sense in an arbitrary semigroup, but the proof in the case of rings is 
nearly identical. 
\end{proof}


\begin{theorem}\label{dpr} 
Let $R$ be an infinite ring and assume
that, for each $m\in\ben$, $m\cdot R$ is central* in $R$.
Let $\alpha\in\ben$
and, for each $n \geq 2$ and $1 \leq i \leq \alpha$, let $d_{n,i}$
be in $R$.  Then for each central subset $C$ of $R$ there is a solution
\[
y_1,y_2,\ldots,y_\alpha,x_{2,1},x_{2,2},z_2,x_{3,1},x_{3,2},x_{3,3},z_3,\ldots
\]
of System $(*)$
contained in $C$; that is, System $(*)$  is centrally partition regular over $R$.
Moreover, the solution
can be chosen so that $y_1$, $y_2$, \ldots, $y_\alpha$ are distinct.
\end{theorem}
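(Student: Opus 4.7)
The plan is to split System $(*)$ into a finite core (the equations for $n = 2,\ldots,K-1$, for a suitable $K$) handled by Lemma~\ref{FECimpliesCIPR}, and an infinite tail ($n \geq K$) handled equation by equation using Lemma~\ref{mcdotg}. Fix a minimal idempotent $p$ in $\beta R$ with $C \in p$ and take $C^\star \in p$ (a subset of $C$) with the hereditary property of \hslemmas{star}. Applying Lemma~\ref{mcdotg} to the central set $C^\star$ yields $m \in \ben$ and an integer $K$ with $m \cdot R \subseteq C^\star - nC^\star$ for every $n \geq K$. Since $m \cdot R$ is central* by hypothesis, $D := C^\star \cap (m \cdot R)$ belongs to $p$; and $D \setminus \{0\}$ also belongs to $p$ because $p$ is non-principal by \hslemmas{non-principal}.

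For the finite core, view the equations for $n = 2,\ldots,K-1$ as a single matrix equation $A\vec x = 0$ whose unknowns are $y_1,\ldots,y_\alpha$ together with $x_{n,j}$ and $z_n$ for $2 \leq n \leq K-1$. Parameterise the solution space by $\vec x = B\vec w$, where $\vec w$ consists of the free variables $y_i$ and $x_{n,j}$ and $B$ is the identity on these plus rows computing $z_n = x_{n,1} + \cdots + x_{n,n} + \sum_i d_{n,i} y_i$. Order the columns of $B$ so that all $w_{x_{n,j}}$-parameters precede all $w_{y_i}$-parameters; then the leftmost non-zero entry of each $z_n$-row sits at column $w_{x_{n,1}}$ with value $1$, so every first entry of $B$ equals $1$ and the weak first entries condition is trivially satisfied. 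Augment $B$ with $\binom{\alpha}{2}$ further rows computing $y_i - y_j$ for $i < j$; each such row has first entry $1$ at column $w_{y_i}$, so the condition is preserved. Applying Lemma~\ref{FECimpliesCIPR} to this augmented matrix with the central set $D \setminus \{0\}$ produces $\vec w \in (R \setminus \{0\})^{\dim \vec w}$ for which every coordinate of the augmented $B\vec w$ lies in $D \setminus \{0\}$. This yields $y_i, x_{n,j}, z_n \in D \subseteq (m \cdot R) \cap C$ satisfying the $K-2$ finite equations, with $y_i \neq y_j$ whenever $i \neq j$.

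For the infinite tail, fix $n \geq K$. Since each $y_i$ lies in $m \cdot R$,
$$\sum_{i=1}^{\alpha} d_{n,i}\, y_i \in m \cdot R \subseteq C^\star - nC^\star,$$
so there exist $z_n \in C^\star$ and $x_{n,1},\ldots,x_{n,n} \in C^\star$ with $z_n - (x_{n,1} + \cdots + x_{n,n}) = \sum_i d_{n,i} y_i$, completing the $n$-th equation with all new variables in $C^\star \subseteq C$. The main technical obstacle is the finite core: arranging the parameterisation matrix $B$ so that every first entry is $1$, and augmenting with the distinctness rows without destroying this property, so that Lemma~\ref{FECimpliesCIPR} can simultaneously place all variables in $C$ and force the $y_i$ apart; once this is set up, the tail follows immediately from Lemma~\ref{mcdotg}.
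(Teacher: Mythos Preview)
Your argument is correct and follows essentially the same route as the paper: split into a finite block handled by Lemma~\ref{FECimpliesCIPR} (with an identity sub-block to place the free variables in the central set and extra rows $y_i-y_j$ to force distinctness) and an infinite tail handled by Lemma~\ref{mcdotg}. The only notable difference is that you work inside $C^\star$ rather than $C$; this is harmless here but unnecessary---the hereditary property of $C^\star$ is never actually used in this theorem (it is needed only in the subsequent Theorem~\ref{dspr}), and the paper simply assumes $0\notin C$ and applies Lemma~\ref{mcdotg} to $C$ directly.
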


\begin{proof} 
Let $C$ be central in $R$.  There is an idempotent $p\in \beta S$ such that $C \in p$, and by \hslemmas{non-principal}, $p \neq 0$.  Hence $C \setminus \{0\} \in p$, so $C \setminus \{0\}$ is also central and we may assume that $0 \notin C$.

By Lemma \ref{mcdotg}, there is an $m \in \ben$ and a $k$ such that,   
if $n\geq k$, then $m\cdot G\subseteq C-nC$.  Since $m \cdot G$ is central*, $C \cap m \cdot G$ is central.

Let $b_1 = 0$ and, for $2 \leq j \leq k$, let $b_j=b_{j-1} + j$.
Let $v=b_k+\alpha$ and
let $A$ be the $(k-1)\times v$ matrix with entries given by
\[
a_{i,j}=\begin{cases}
 1 & \text{if }b_i < j \leq b_{i+1};\\
 d_{i+1,t} & \text{if } j = b_k + t;\\
 0 & \text{otherwise.}
\end{cases}
\]
Let $B$ be an $\binom \alpha 2 \times v$ matrix such that for every 
$b_k < i < j \leq b_k + \alpha$, some row of $B$ has a $1$ in position $i$ and a $-1$ in position $j$, 
with all other entries equal to $0$. (If $\alpha=1$, let $B$ be empty.)
Thus, for example, if $k=4$ and $\alpha = 3$, then 
\[
\begin{pmatrix}
A \\ B
\end{pmatrix} =
\begin{pmatrix}
1&1&0&0&0&0&0&0&0&d_{2,1}&d_{2,2}&d_{2,3}\\
0&0&1&1&1&0&0&0&0&d_{3,1}&d_{3,2}&d_{3,3}\\
0&0&0&0&0&1&1&1&1&d_{4,1}&d_{4,2}&d_{4,3}\\
0&0&0&0&0&0&0&0&0&1&-1&0 \\
0&0&0&0&0&0&0&0&0&1&0&-1 \\
0&0&0&0&0&0&0&0&0&0&1&-1 \\
\end{pmatrix}.
\]
Let $I$ be the $v\times v$ identity matrix.
Then $\begin{pmatrix}I\\ A \\ B\end{pmatrix}$
satisfies the first entries condition with each first entry equal to $1$, so by Theorem~\ref{FECimpliesCIPR} there exist
\[
 x_{2,1},x_{2,2},x_{3,1},x_{3,2},x_{3,3},\ldots,x_{k,1},x_{k,2},\ldots,x_{k,k},y_1,y_2,\ldots,y_\alpha
\] 
such that all entries of 
\[
\begin{pmatrix}I\\ A\\ B\end{pmatrix}
\begin{pmatrix}x_{2,1}\\
\vdots\\ x_{k,k}\\ y_1\\
\vdots\\
y_\alpha
\end{pmatrix}
\]
are in $C\cap m\cdot G$.

For $2 \leq n \leq k$, let $z_n =  x_{n,1} + \cdots x_{n,n} + d_{n,1}y_1 + \cdots + d_{n,\alpha} y_\alpha$.  
The submatrix $I$ ensures that the $x_{n,j}$ ($2 \leq n \leq k$ and $1 \leq j \leq n$) and $y_i$ 
($1 \leq i \leq \alpha$) are in $C$, the submatrix $A$ ensures that the $z_n$ ($2 \leq n \leq k$) are in $C$, 
and the submatrix $B$ ensures that $y_i \neq y_j$ ($1 \leq i < j \leq \alpha$) as $y_i - y_j \in C \subseteq R \setminus \{0\}$.

For $n > k$, $d_{n,1}y_1 + \cdots + d_{n,\alpha}y_\alpha \in m\cdot G \subseteq C-nC$, so choose $z_n$ and $x_{n,1} \ldots x_{n,n}$ in $C$ such that $d_{n,1}y_1 + \cdots + d_{n,\alpha}y_\alpha = z_n - x_{n,1}-\cdots-x_{n,n}$.
\end{proof}

We are now ready for the main result of this section.


\begin{theorem} \label{dspr} 
Let $R$ be an infinite ring and assume
that, for each $m\in\ben$, $m\cdot R$ is central* in $R$.
Let $\alpha\in\ben$
and, for each $n \geq 2$ and $1 \leq i \leq \alpha$, let $d_{n,i}$
be in $R$.  Then System $(*)$ is strongly centrally partition regular over $R$.
\end{theorem}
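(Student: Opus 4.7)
The plan is to strengthen the argument of Theorem~\ref{dpr} to guarantee that every variable in the solution takes a distinct value. As in that proof, I fix a central subset $C \subseteq R$ with $0 \notin C$ (using \hslemmas{non-principal}) and apply Lemma~\ref{mcdotg} to obtain $m \in \ben$ and $k \geq 2$ with $m \cdot R \subseteq C - nC$ for every $n \geq k$. The construction then splits into a ``finite block'' of rows $2 \leq n \leq k$ handled by the matrix method, and an ``infinite tail'' for $n > k$ handled by induction, with distinctness enforced in each phase.

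For the finite block I enlarge the matrix $\begin{pmatrix} I \\ A \\ B \end{pmatrix}$ used in Theorem~\ref{dpr} by adjoining further rows to $B$ that encode the pairwise distinctness of every pair of variables in the finite collection
\[
\{y_i : 1 \leq i \leq \alpha\} \cup \{x_{n,j} : 2 \leq n \leq k,\ 1 \leq j \leq n\} \cup \{z_n : 2 \leq n \leq k\},
\]
where each $z_n$ is expanded as $\sum_{j=1}^n x_{n,j} + \sum_{i=1}^\alpha d_{n,i} y_i$. For each pair $(u,v)$ I take whichever of $u - v$ or $v - u$ has leftmost nonzero coefficient equal to $+1$; a short case check over the column ordering $x_{2,1}, x_{2,2}, x_{3,1}, \ldots, x_{k,k}, y_1, \ldots, y_\alpha$ shows that one of the two choices always works (for example, if $z_n - z_{n'}$ is considered with $n < n'$, the leftmost nonzero entry is the $+1$ in column $x_{n,1}$). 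The augmented matrix then still satisfies the weak first entries condition with every first entry equal to $+1$, and Lemma~\ref{FECimpliesCIPR} applied to the central set $C \cap m \cdot R$ yields pairwise distinct values for every variable in the finite block, all lying in $C \cap m \cdot R$; in particular each $y_i \in m \cdot R$.

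For the infinite tail I proceed by induction on $n$. At stage $n > k$, let $F_n$ be the finite set of values chosen so far; since the minimal idempotent $p$ containing $C$ is non-principal by \hslemmas{non-principal}, the set $C \setminus F_n$ remains in $p$ and is therefore central. Writing $a_n := \sum_i d_{n,i} y_i$, which lies in $m \cdot R$ because $m \cdot R$ is an ideal of $R$, I seek pairwise distinct $z_n, x_{n,1}, \ldots, x_{n,n} \in C \setminus F_n$ satisfying $z_n - \sum_j x_{n,j} = a_n$. I first apply Lemma~\ref{mcdotg} to $C \setminus F_n$ to obtain a decomposition with summands in $C \setminus F_n$, then enforce pairwise distinctness by a swap argument: whenever two summands $(u,v)$ coincide (or a summand equals $z_n$), replace them by $(u + t, v - t)$ for a suitable $t \in R$ that keeps both values in $C \setminus F_n$ and breaks the coincidence. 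The main obstacle is that applying Lemma~\ref{mcdotg} to $C \setminus F_n$ gives an integer $m_n$ which may differ from the original $m$, while $a_n$ is a priori only known to lie in $m \cdot R$; I expect to handle this by observing that the proof of Lemma~\ref{mcdotg} in \cite{BHLS} actually yields a version uniform over members of a fixed minimal idempotent $p$ (so the same $m$ works for every $C \in p$), or alternatively by absorbing the finite exclusion of $F_n$ into the swap step using the translation-richness of central subsets of a commutative group.
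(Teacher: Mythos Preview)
Your finite-block step is sound: adjoining difference rows for every pair among the $y_i$, $x_{n,j}$ and (expanded) $z_n$ with $n \leq k$ does yield a matrix satisfying the weak first entries condition with every first entry $+1$, so a single application of Lemma~\ref{FECimpliesCIPR} forces all of these finitely many values to be distinct. The infinite tail, however, has a real gap. Re-applying Lemma~\ref{mcdotg} to $C \setminus F_n$ returns both a modulus $m_n$ and a threshold $k_n$; at stage $n$ you would need $a_n \in m_n \cdot R$ and $n \geq k_n$, neither of which is guaranteed, and the uniformity-over-$p$ claim you fall back on is asserted, not verified. The swap $(u,v) \mapsto (u+t,v-t)$ is likewise unjustified: when $u=v$ you need a nonzero $t$ with $u \pm t \in C$, i.e.\ $t \in (C-u) \cap (u-C)$, and for a generic element $u$ of a central set $C$ there is no reason this intersection contains anything beyond $0$; nor do you argue that iterated swaps terminate without creating new collisions.

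The paper bypasses all of this with the set $C^\star = \{x \in C : -x + C \in p\}$, which by \hslemmas{star} lies in $p$ and satisfies $-x + C^\star \in p$ for every $x \in C^\star$. One applies Theorem~\ref{dpr} \emph{once}, to $C^\star$, obtaining a solution $y_i, x_{n,j}, z_n$ in $C^\star$ with only the $y_i$ required distinct. Then for each $n \geq 2$ in turn one chooses perturbations $w_1, \ldots, w_n$ and sets $u_{n,j} = x_{n,j} + w_j$, $v_n = z_n + \sum_j w_j$. Because each $x_{n,j}$ and $z_n$ lies in $C^\star$, the translates $-x_{n,j} + C$ and $-z_n + C$ are all in $p$, so their intersection minus any finite forbidden set is still in $p$; choosing the $w_j$ so that every nonempty partial sum $\sum_{j \in F} w_j$ remains in this set (an IP-style inductive choice) lands all the new values in $C$ while avoiding everything previously used. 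No second appeal to Lemma~\ref{mcdotg} is needed, and distinctness is built in directly rather than repaired after the fact. This $C^\star$ device is exactly the missing idea in your tail argument.
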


\begin{proof} 
We already know that System~$(*)$ is centrally partition regular, by Theorem~\ref{dpr}.  We will apply Theorem~\ref{dpr} to a different central set ($C^\star$, defined below), then use that solution to build a solution in $C$ with all values of the variables distinct.  This will be possible because at each stage we will only have finitely many previously used values to avoid: since the minimal idempotent $p$ witnessing the fact that various sets $X$ are central is non-principal, sets obtained from $X$ by deleting finitely many elements remain in $p$.

So let $C$ be a central subset of $G$ and pick a minimal idempotent $p\in\beta G$ 
such that $C\in p$. As in the proof of Theorem~\ref{dpr}, $p$ is non-principal and we can assume
that $0\notin C$.  
For each $B\in p$, let $B^\star=\{x\in B:B-x\in p\}$.  If
$B\in p$ and $x\in B^\star$, then by \hslemmas{star}, $B^\star-x\in p$.

Again by \hslemmas{star}, we have that $C^\star$ is central, so pick by Theorem~\ref{dpr} 
a solution 
\[
y_1,y_2,\ldots,y_\alpha,x_{2,1},x_{2,2},z_2,x_{3,1},x_{3,2},x_{3,3},z_3,\ldots
\]
 of System $(*)$
contained in $C^\star$ such that the values of the $y_i$ are distinct.  We will use this solution to build a new solution in variables $y_i$, $u_{i,j}$ and $v_i$ for which the values taken by the variables are all distinct.

Suppose that, for $2 \leq i < n$ and $1 \leq j \leq i$, we have already chosen $u_{i,j}$ and $v_i$ distinct from 
each other and from $y_1,y_2,\ldots,y_\alpha$ such that
\[
u_{i,1} + \cdots + u_{i,i} + d_{i,1} y_1 + \cdots + d_{i,\alpha} y_\alpha = v_i.
\]
We will choose $w_1, \ldots, w_n$ in such a way that, setting $u_{n,i} = x_{n,i} + w_i$ and $v_n = z_n + w_1 + \cdots + w_n$, the same is true with $n$ replaced by $n+1$.

Let
\begin{align*}
A & = (C - x_{n,1}) \cap (C-x_{n,2}) \cap \cdots \cap (C-x_{n,n}) \cap (C-z_n), \\
B & = \{y_1, \ldots y_\alpha\} \cup \{u_{i,j} : 2 \leq i < n, 1 \leq j \leq i\} \\
  & \qquad\qquad \cup \{v_i : 2 \leq i < n\} \cup \{x_{n,i} - z_n : 1 \leq i \leq n\}, \text{ and} \\
D & = A \setminus (B \cup (B-x_{n,1}) \cup \cdots \cup (B-x_{n,n}) \cup (B-z_n) ).
\end{align*}
Since the $x_{i,j}$ and $z_i$ are in $C^\star$, $A \in p$.  Since $B$ is finite, $D \in p$. Choose $w_1\in D^\star$.

Let $2\leq k\leq n$ and suppose that we have already chosen $w_1, \ldots, w_{k-1}$ such that
\begin{enumerate}[(i)]
	\item if $\emptyset \neq F \subseteq \nhat{k-1}$, then $\sum_{j \in F} w_j \in D^\star$, and
	\item if $1 \leq i < j \leq k-1$, then $x_{n,i} + w_i \neq x_{n,j} + w_j$.
\end{enumerate}
Choose 
\[
w_k \in D^\star \cap \bigcap_{\emptyset \neq F \subseteq \nhat {k-1}} (D^\star-\sum_{j \in F} w_j) \setminus \{x_{n,j} + w_j - x_{n,k} : 1 \leq j < k\}.
\]
Then (i) and (ii) hold with $k$ replaced by $k+1$.

Having chosen $w_1,\ldots, w_n$, let $u_{n,i} = x_{n,i} + w_i$ and let $v_n = z_n + w_1 + \cdots + w_n$.  
By (i), $w_1, \ldots, w_n$ and $w_1 + \cdots + w_n$ are each in $D^\star \subseteq D$.  
Hence by the definition of $A$, $u_{n,1}, \ldots, u_{n,n}$ and $v_n$ are all in $C$, and by 
the definitions of $B$ and $D$, $u_{n,1}, \ldots, u_{n,n}$ and $v_n$ are all distinct from the 
$y_i$ ($1 \leq i \leq \alpha$), $u_{i,j}$ ($2 \leq i < n$ and $1 \leq j \leq i$) and $v_i$ ($2 \leq i < n$).  
By (ii), the $u_{n,j}$ are all distinct.  Finally, suppose that $v_n = u_{n,j}$ for 
some $j$.  Then $w_1 + \cdots + w_{j-1} + w_{j+1} + \cdots + w_n = x_{n,j}-z_n \in B$, 
but by (i) $w_1 + \cdots + w_{j-1} + w_{j+1} + \cdots + w_n \in D^\star \subseteq D$, which is a contradiction.
\end{proof}


\section{Applications} \label{applications}

In this section we show that for any two subrings $R$ and $S$ of $\beq$
such that $R$ is not contained in $S$, there is a system that is partition
regular over $R$ but not over $S$. In fact, we shall obtain this for 
a choice of the sequence $\langle d_{n,1}\rangle_{n=1}^\infty$, making 
System $(*)$ 
strongly centrally partition regular over $R$ while it has no
solutions in $S$.

In particular, this will give us a chain 
of $\gc$ subgroups of $\beq$ (where $\gc$ is the cardinality of the continuum),
any two of which 
have different partition regular systems, as stated in the introduction. (To 
see that any countable set has a chain of subsets ordered by $\ber$, simply
consider $\big\{\{x\in\beq:x<y\}:y\in\ber\big\}$.)

\begin{definition}\label{defgf} Let $P$ be the set of primes and
let $F\subseteq P$.  Then
\[
\beg_F=\left\{a/b:a\in\bez, b\in\ben\text{ and all prime factors of }
b\text{ are in }F\right\}.
\]
\end{definition}

Thus $\beg_\emptyset = \bez$, $\beg_{\{2\}}=\bed$ and $\beg_P=\beq$. It is easy to check that the 
$\beg_F$ are precisely the subrings of $\beq$. (Given a subring $R$ of $\beq$, let
$F=\{p\in P:\frac{1}{p}\in R\}$ and use the fact that $1\in R$.)

We will invoke Theorem \ref{dspr}, so we need to know that
for any subset $F$ of $P$ and any $m\in\ben$,
$m\cdot\beg_F$ is central* in $\beg_F$.  We will in fact
show that it is IP*. Recall that this means that, given
any sequence $\langle x_n\rangle_{n=1}^\infty$ in $\beg_F$,
there is some $H\in\pf(\ben)$ such that $\sum_{n\in H}x_n\in m\cdot\beg_F$ or,
equivalently, that $m\cdot \beg_F$ is a member of every idempotent in 
$\beta\beg_F$.

\begin{proposition} \label{mgipst}
Let $m \in \ben$, $F \subseteq P$ and $\langle x_n \rangle_{n=1}^{(m-1)^2+1}$ be a sequence of elements of $\beg_F$.  Then there exists $\emptyset \neq H \subseteq \nhat{(m-1)^2+1}$ such that $\sum_{n \in H} x_n \in m \cdot \beg_F$.
\end{proposition}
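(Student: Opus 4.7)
The plan is to reduce the problem to a pigeonhole statement about partial sums of integers. First I factor $m=m_Fm'$, where $m_F$ has every prime factor in $F$ and $m'$ has no prime factor in $F$. Since $1/m_F\in\beg_F$, the element $m_F$ is a unit of $\beg_F$, so $m\cdot\beg_F=m'\cdot\beg_F$. It therefore suffices to find a non-empty $H\subseteq\nhat{(m-1)^2+1}$ with $\sum_{n\in H}x_n\in m'\cdot\beg_F$.

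Next I clear denominators uniformly. I choose a common denominator $B\in\ben$ for $x_1,\ldots,x_{(m-1)^2+1}$ all of whose prime factors lie in $F$, and write $x_n=c_n/B$ with $c_n\in\bez$. Since $B$ and $m'$ are built from disjoint sets of primes, $\gcd(B,m')=1$, and since $1/B\in\beg_F$, the condition $\sum_{n\in H}x_n\in m'\cdot\beg_F$ is equivalent to the purely arithmetic statement $m'\mid\sum_{n\in H}c_n$ in $\bez$.

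Finally I apply the classical partial-sums pigeonhole: among the $m'+1$ integers $s_0=0,\ s_1=c_1,\ \ldots,\ s_{m'}=c_1+\cdots+c_{m'}$, two must be congruent modulo $m'$, and their difference gives a non-empty consecutive block $H\subseteq\nhat{m'}$ with $m'\mid\sum_{n\in H}c_n$. Since $m'\mid m$ we have $m'\le m$, and for $m\ge 2$ one checks $m\le (m-1)^2+1$; the case $m=1$ is trivial because $m\cdot\beg_F=\beg_F$ itself. Hence $H$ lies in the allowed index set.

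The only step that requires real care is the equivalence in the second paragraph: it rests on having chosen $B$ with prime factors only in $F$, so that $B$ is simultaneously a unit of $\beg_F$ and coprime to $m'$. Once this reduction is justified, the remaining work is bookkeeping and the well-known pigeonhole on partial sums.
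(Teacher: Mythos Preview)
Your proof is correct and takes a genuinely different route from the paper's. Both arguments begin the same way, clearing denominators to write $x_n=c_n/s$ with $c_n\in\bez$ and $s\in\ben$ having all prime factors in $F$. From there they diverge. The paper asserts that among the $(m-1)^2+1$ integers $c_n$ at least $m$ lie in a common residue class modulo $m$, and takes $H$ to be such an $m$-element set; but this pigeonhole step is mis-stated, since $(m-1)^2+1\le m(m-1)$ for every $m\ge 2$ (for $m=3$, five integers with residues $0,0,1,1,2$ give a counterexample). You instead use the classical partial-sums pigeonhole, which needs only $m'\le m$ consecutive terms and is airtight; since $m\le (m-1)^2+1$ for $m\ge 2$, your $H$ lies in the prescribed index set. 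So your argument both sidesteps the paper's slip and shows the sharper fact that $m$ terms already suffice.

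One minor simplification: the factorisation $m=m_Fm'$ and the passage to $m'$ are correct but unnecessary. Once $m\mid\sum_{n\in H}c_n$, say $\sum_{n\in H} c_n=km$, you immediately get $\sum_{n\in H}x_n=km/s=m\cdot(k/s)\in m\cdot\beg_F$ because $k/s\in\beg_F$; the coprimality of $s$ with anything is never used for this direction. Running the partial-sums argument modulo $m$ directly would shorten your proof by a paragraph without losing anything.
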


\begin{proof}
Write the $x_n$ over a common denominator: choose $s \in \ben$ with all prime factors in $F$ such that $x_n = y_n / s$ with $y_n \in \bez$.  At least $m$ of the $y_n$ must have the same residue modulo $m$; let $H$ be a set of size $m$ such that $y_n \equiv h \pmod m$ for $n \in H$.  Then $\sum_{n \in H} y_n = km$ for some $k$, hence $\sum_{n \in H} x_n = km/s \in m \cdot \beg_F$.
\end{proof}

\begin{theorem}\label{onenotother} Let $F$ and $H$ be subsets of $P$ with $H\setminus F\neq \emp$ and 
pick $q\in H\setminus F$.  Let $\alpha=1$ and for $k\in\ben$, let $d_{n,1}= \frac 1 {q^n}$.
Then System $(*)$ is strongly centrally partition regular over $\beg_H$ but is not partition
regular over $\beg_F$.\end{theorem}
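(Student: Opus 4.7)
The statement splits naturally into two independent claims, which I would prove by completely different means.

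For the positive direction, that System~$(*)$ is strongly centrally partition regular over $\beg_H$, the plan is to cite Theorem~\ref{dspr} after verifying its hypotheses. Because $q \in H$, the coefficients $d_{n,1} = 1/q^n$ lie in $\beg_H$, so the system is a genuine instance of System~$(*)$ over the infinite ring $\beg_H$. The only substantive thing to check is that $m \cdot \beg_H$ is central* in $\beg_H$ for every $m \in \ben$. This is handed to us by Proposition~\ref{mgipst}: given any infinite sequence in $\beg_H$, that proposition applied to its first $(m-1)^2 + 1$ terms produces a non-empty finite subsum in $m \cdot \beg_H$, which by the characterisation of IP*-sets recalled in Section~\ref{betaS} says exactly that $m \cdot \beg_H$ is an IP*-set; since every minimal idempotent is an idempotent, IP* implies central*.

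For the negative direction, that System~$(*)$ is not partition regular over $\beg_F$, I would in fact establish the stronger claim that the system has no solution in $\beg_F$ with $y_1 \neq 0$; the trivial one-colouring of $\beg_F \setminus \{0\}$ then exhibits the failure of partition regularity. The key tool is the $q$-adic valuation $v_q$. Because $q \notin F$, every denominator permitted in $\beg_F$ is coprime to $q$, so every non-zero element of $\beg_F$ has $v_q$-value a non-negative integer. Given a putative solution with $y_1 \neq 0$, set $v = v_q(y_1) \geq 0$ and pick any integer $n$ with $n \geq 2$ and $n > v$. Rearranging the $n$th equation yields $y_1/q^n = z_n - x_{n,1} - \cdots - x_{n,n}$, which forces an element of $q$-adic valuation $v - n < 0$ to lie in the ring $\beg_F$, a contradiction.

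No step here presents a genuine obstacle: the positive half is a direct application of the machinery assembled in Section~\ref{general}, and the negative half is the valuation obstruction familiar from the argument of \cite{HLS} distinguishing $\beq$ from $\ber$, transplanted from the archimedean to the $q$-adic setting. The entire content of the theorem lies in the observation that the choice $d_{n,1} = 1/q^n$ is tuned exactly to $q$: it places all coefficients inside $\beg_H$ while simultaneously demanding, from any solution in $\beg_F$, an unbounded degree of divisibility by $q$ that $\beg_F$ cannot supply.
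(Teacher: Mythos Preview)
Your proposal is correct and follows the same approach as the paper. The paper's own proof is even terser---it simply declares the non-existence of solutions in $\beg_F$ to be ``immediate'' and cites Theorem~\ref{dspr} for the positive direction---but your $q$-adic valuation argument is precisely the formalisation of that immediate observation, and your verification of the central* hypothesis via Proposition~\ref{mgipst} is exactly what the paper intends.
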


\begin{proof} It is immediate that System $(*)$ has no solutions in $\beg_F$.  
By Theorem \ref{dspr} with $R=\beg_H$, System $(*)$ is strongly 
centrally partition regular over $\beg_H$.\end{proof}

By applying this to a chain of size $\gc$ of subsets of the primes, we 
immediately obtain a chain of $\gc$ subrings of $\beq$, no two of which have
the same partition regular systems.

If we want to separate $\beq$ from all proper subrings simultaneously then we
have the following, whose proof is identical.
Let $p_1, p_2, \ldots$ be an enumeration of the primes.

\begin{theorem} \label{overq} 
Let $\alpha=1$ and for $n\in\ben$, let $d_{n,1}=\prod_{t=1}^n\frac{1}{p_t^n}$.  
Then System $(*)$ is
strongly centrally partition regular over $\beq$, but is not partition regular over $\beg_F$ for any proper 
subset $F$ of $P$. \qed
 \end{theorem}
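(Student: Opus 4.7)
The plan is to mimic the proof of Theorem~\ref{onenotother} verbatim, handling the positive and negative directions separately.

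For the positive direction---strong central partition regularity over $\beq$---I would appeal to Theorem~\ref{dspr} with $R = \beq = \beg_P$. The only hypothesis to verify is that $m \cdot \beq$ is central* in $\beq$ for every $m \in \ben$. Proposition~\ref{mgipst} applied with $F = P$ shows that $m \cdot \beq$ is in fact IP* in $\beq$, and since every minimal idempotent is in particular an idempotent, IP* implies central*. Hence Theorem~\ref{dspr} applies and yields the desired conclusion.

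For the negative direction, I would show the stronger statement that for any proper subset $F$ of $P$, System~$(*)$ admits no solution in which every variable is a nonzero element of $\beg_F$; this is strictly stronger than failure of partition regularity over $\beg_F$. Fix such an $F$, pick $q = p_{t_0} \in P \setminus F$, and suppose for contradiction that such a solution exists. Since $y_1 \in \beg_F \setminus \{0\}$, write $y_1 = a/b$ in lowest terms, with $a \in \bez \setminus \{0\}$ and $p_{t_0} \nmid b$. For each $n \geq 2$, the $n$th equation of System~$(*)$ rearranges to
\[
d_{n,1}\, y_1 = z_n - (x_{n,1} + \cdots + x_{n,n}) \in \beg_F .
\]
But $d_{n,1}\, y_1 = a/\bigl(b \prod_{t=1}^{n} p_t^n\bigr)$, and for $n \geq t_0$ the exponent of $p_{t_0}$ in this denominator is $n$, while it can be reduced by at most the (fixed, finite) number of times $p_{t_0}$ divides $a$. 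Thus for all sufficiently large $n$, the prime $p_{t_0} \notin F$ appears in the reduced denominator, contradicting $d_{n,1}\, y_1 \in \beg_F$.

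I foresee no real obstacle: all of the heavy machinery lives in Sections~\ref{betaS} and~\ref{general}, and the negative direction is a short $p$-adic valuation argument. The only sanity check worth performing is that ``partition regular over $\beg_F$'' in the paper's convention forces $y_1 \neq 0$---equivalently, that the finite colouring is of $\beg_F \setminus \{0\}$ rather than all of $\beg_F$---so that the trivial $y_1 = 0$ loophole does not apply; this is consistent both with the abstract's definition and with the analogous step in Theorem~\ref{onenotother}.
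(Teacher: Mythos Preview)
Your proposal is correct and follows exactly the route the paper intends: the paper itself gives no separate proof of this theorem (note the \qed), stating just before it that the proof is identical to that of Theorem~\ref{onenotother}, and you have faithfully reproduced that argument---invoking Theorem~\ref{dspr} via Proposition~\ref{mgipst} for the positive half, and observing for the negative half that System~$(*)$ has no solutions in $\beg_F\setminus\{0\}$ at all. Your explicit valuation argument for the negative half simply spells out what the paper leaves as ``immediate.''
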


One might raise the objection that it almost seems like cheating to show that a system
is not partition regular over $G$ by showing that it has no solutions there at all.
We see now that by taking $\alpha=2$, we can get 
examples where System $(*)$ has
solutions in $\ben$, but the conclusions of Theorems \ref{onenotother} and \ref{overq} still hold.

\begin{theorem} \label{onenototherplus} 
Let $F$ and $H$ be subsets of $P$  
with $H\setminus F\neq \emp$ and 
pick $q\in H\setminus F$.  Let $\alpha=2$ and, for $n\in\ben$, let $d_{n,1}=\frac{-1}{q^n}$
and $d_{n,2}=\frac{2}{q^n}$. Then System $(*)$ has solutions in $\ben$ and
is strongly centrally partition regular over $\beg_H$, but is not partition
regular over $\beg_F$.\end{theorem}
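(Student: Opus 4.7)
First I would dispatch the easy parts. For solutions in $\ben$, take $y_1 = 2$ and $y_2 = 1$ so that $d_{n,1} y_1 + d_{n,2} y_2 = 0$; the $n$-th equation then reduces to $x_{n,1} + \cdots + x_{n,n} = z_n$, solved by $x_{n,j} = 1$, $z_n = n$. Strong central partition regularity over $\beg_H$ is immediate from Theorem~\ref{dspr}, once one notes that Proposition~\ref{mgipst} shows $m \cdot \beg_H$ is IP* (and hence central*) in $\beg_H$ for every $m \in \ben$.

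For the negative claim, the plan is to first force $y_1 = 2 y_2$ on every $\beg_F$-solution, and then apply Rado's theorem for subrings to a finite truncation of an equivalent reduced system. Multiplying the $n$-th equation of System~$(*)$ by $q^n$ gives
\[
 2 y_2 - y_1 = q^n \bigl(z_n - x_{n,1} - \cdots - x_{n,n}\bigr),
\]
so $2 y_2 - y_1 \in q^n \cdot \beg_F$ for every $n \geq 2$. Writing $2 y_2 - y_1 = a/b$ with $b$ built only from primes in $F$, the fact that $q \notin F$ forces $q^n \mid a$ in $\bez$ for all $n \geq 2$, hence $a = 0$ and $y_1 = 2 y_2$. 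Any monochromatic solution of System~$(*)$ in $\beg_F$ is therefore also a monochromatic solution of the augmented system obtained by adjoining $y_1 - 2 y_2 = 0$; using this new equation to eliminate the $y$-terms from the others yields an equivalent integer-coefficient system, namely $y_1 - 2 y_2 = 0$ together with $x_{n,1} + \cdots + x_{n,n} - z_n = 0$ for each $n \geq 2$.

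The final step, and the main technical point, is to apply Rado's theorem (extended to subrings of $\bec$, as recalled in the introduction) to a finite truncation $n = 2, \ldots, N$ of this reduced system; since the field of fractions of $\beg_F$ is $\beq$, it suffices to show the columns condition fails over $\beq$. Call the row coming from $y_1 - 2 y_2 = 0$ the $y$-row. The only columns with nonzero entry in the $y$-row are $y_1$ (entry $+1$) and $y_2$ (entry $-2$); all other columns have $y$-row entry $0$ and are supported on the remaining disjoint rows. Any nonempty subset of $\{y_1, y_2\}$ contributes a $y$-row sum of $+1$, $-2$, or $-1$, so the initial block $I_1$ must contain neither $y_1$ nor $y_2$; consequently the $\beq$-span of $\{\vec c_i : i \in I_1\}$ is orthogonal to the $y$-row basis vector. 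Then whichever later block first contains $y_1$ or $y_2$ produces a column sum whose $y$-row component is nonzero but must lie in that span, a contradiction. Hence the columns condition fails, Rado's theorem supplies a bad finite colouring of $\beg_F \setminus \{0\}$ for the reduced truncated system, and the same colouring defeats System~$(*)$ because any putative monochromatic solution would, by the forced relation $y_1 = 2y_2$, restrict to a monochromatic solution of the truncated augmented system.
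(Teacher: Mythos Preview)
Your proof is correct. The first two parts---exhibiting a solution in $\ben$ via $y_1=2$, $y_2=1$, and deducing strong central partition regularity over $\beg_H$ from Theorem~\ref{dspr} together with Proposition~\ref{mgipst}---are exactly what the paper does.

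For the failure of partition regularity over $\beg_F$ the two arguments share the same key observation, namely that every $\beg_F$-solution is forced to satisfy $y_1=2y_2$, but diverge thereafter. The paper simply writes down an explicit $2$-colouring of $\beg_F\setminus\{0\}$ (by the parity of $\lfloor\log_2|x|\rfloor$) in which $x$ and $2x$ always receive different colours; this immediately gives $y_1\neq 2y_2$ in any monochromatic configuration, contradicting the forced relation. You instead invoke Rado's theorem for subrings to manufacture such a colouring abstractly. Your columns-condition check is fine, but the passage through the full reduced system is more than is needed: once $y_1=2y_2$ is forced, it suffices to note that the single equation $y_1-2y_2=0$ already fails the columns condition (its columns are $(1)$ and $(-2)$, and no nonempty subfamily sums to zero), so Rado supplies a bad colouring directly---the equations $x_{n,1}+\cdots+x_{n,n}=z_n$ contribute nothing to the obstruction in your argument. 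The paper's explicit colouring is shorter and self-contained; your route has the merit of locating the non-regularity squarely within Rado's framework, at the cost of quoting a heavier theorem.
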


\begin{proof}   
By Theorem \ref{dspr} with $R=\beg_H$, System $(*)$ is strongly 
centrally partition regular over $\beg_H$.
Let $y_1=2$ and $y_2=1$. Then for every $n\in\ben$, $d_{n,1}y_1+d_{n,2}y_2=0$ so
it is easy to find a solution to System $(*)$ in $\ben$.

To see that  System $(*)$ is not partition regular over $\beg_F$, 
two-colour $\beg_F\setminus\{0\}$ so that for all $x\in \beg_F\setminus\{0\}$,
$x$ and $2x$ do not have the same colour.  (For example colour by the
parity of $\lfloor \log_2(|x|)\rfloor$.)  Suppose we have
a monochromatic solution to System $(*)$ in $\beg_F$.
We have that $y_1 \neq 2y_2$ and, for each $n \in \ben$, 
$(2y_2-y_1)/q^n = z_n - x_{n,1} - \cdots - x_{n,n} \in \beg_F$, 
which is a contradiction for $n$ sufficiently large.
\end{proof}

Similarly, we have an analogue of Theorem \ref{overq}.

\begin{theorem}\label{overqplus} 
Let $\alpha=2$ and for $n\in\ben$, let $d_{n,1}=\prod_{t=1}^n\frac{-1}{p_t^n}$ and
$d_{n,2}=\prod_{t=1}^n\frac{2}{p_t^n}$.  
Then System $(*)$ is
strongly partition regular over $\beq$ and has solutions in $\ben$, but 
is not partition regular over $\beg_F$ for any proper nonempty
subset $F$ of $P$. \qed \end{theorem}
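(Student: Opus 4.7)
The plan is to mimic the proof of Theorem~\ref{onenototherplus}, with $\prod_{t=1}^n p_t^n$ playing the role previously played by a single $q^n$ to obstruct partition regularity over every proper subring of $\beq$ simultaneously. First I would verify the hypotheses of Theorem~\ref{dspr} with $R=\beq$: since $\beq$ is a field, $m\cdot\beq=\beq$ for every $m\in\ben$, which is trivially central* (alternatively, this is Proposition~\ref{mgipst} applied with $F=P$). Theorem~\ref{dspr} then yields that System~$(*)$ is strongly centrally partition regular over $\beq$, and hence strongly partition regular over $\beq$.

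To exhibit solutions in $\ben$, I would set $y_1=2$ and $y_2=1$, so that
\[
d_{n,1}y_1+d_{n,2}y_2=(-2+2)\prod_{t=1}^n\frac{1}{p_t^n}=0
\]
for every $n\geq 2$. The remaining system $x_{n,1}+\cdots+x_{n,n}=z_n$ is then trivially solvable in $\ben$, for example by taking $x_{n,j}=1$ and $z_n=n$.

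The main step, paralleling Theorem~\ref{onenototherplus}, is to rule out partition regularity over $\beg_F$ for any proper subset $F$ of $P$. Pick $q\in P\setminus F$, say $q=p_{t_0}$, and $2$-colour $\beg_F\setminus\{0\}$ by the parity of $\lfloor\log_2|x|\rfloor$, so that $x$ and $2x$ never share a colour. In a monochromatic solution, $y_1$ and $y_2$ lie in the same class, forcing $y_1\neq 2y_2$. The $n$th equation then rearranges to
\[
\frac{2y_2-y_1}{\prod_{t=1}^n p_t^n}=z_n-x_{n,1}-\cdots-x_{n,n}\in\beg_F.
\]
Since $2y_2-y_1\in\beq\setminus\{0\}$ has a finite $q$-adic valuation, the left-hand side has $q$-adic valuation $v_q(2y_2-y_1)-n$ whenever $n\geq t_0$, which is negative for all sufficiently large $n$; but every nonzero element of $\beg_F$ has non-negative $q$-adic valuation because $q\notin F$, giving the desired contradiction. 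I expect no real obstacle here beyond pinning down the valuation step carefully; everything else is a direct transcription of the already-established arguments.
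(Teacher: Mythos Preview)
Your proposal is correct and follows exactly the approach the paper intends: the paper omits the proof entirely (the \qed marks it as immediate from Theorem~\ref{onenototherplus}), and your argument is precisely the expected transcription of that proof with $\prod_{t=1}^n p_t^n$ in place of $q^n$. The only minor remark is that you are implicitly reading $d_{n,1}=\prod_{t=1}^n\frac{-1}{p_t^n}$ as $-\prod_{t=1}^n\frac{1}{p_t^n}$ (and similarly for $d_{n,2}$), which is clearly the intended meaning given the analogy with Theorem~\ref{onenototherplus} and the claimed existence of solutions in $\ben$; you might note this explicitly, since a literal reading of the product would introduce factors of $(-1)^n$ and $2^n$ that break both the $y_1=2$, $y_2=1$ cancellation and the rearrangement to $(2y_2-y_1)/\prod p_t^n$.
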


Let us end by remarking that it would be interesting to understand
what happens beyond $\beq$---in other words, for subrings (or subgroups) that
lie between $\beq$ and $\ber$. Of course, if one allows non-rational
coefficients then it is easy to separate sets, so the interest would be for
systems of equations whose coefficients are integers (or, equivalently,
rationals). 

We see now that the system mentioned in the Introduction that 
distinguishes $\ber$ from $\beq$ in fact distinguishes any uncountable subgroup
$G$ of $\ber$ from $\beq$.

In the following result we use, as in \cite{HLS}, the Baumgartner--Hajnal theorem \cite[Theorem 1]{BH}.
This theorem states that
if $A$ is a linearly ordered set with
the property that whenever
$\varphi :A\to\ben$, there is an infinite
increasing sequence in $A$ on which $\varphi$ is constant, then
for any countable ordinal $\alpha$, and
any finite colouring $\psi$ of the two-element subsets of $A$ there is a subset $B$ of $A$ which
has order type $\alpha$ such that $\psi$ is constant on
the two-element subsets of $B$.  (The theorem was proved in \cite{BH} using Martin's axiom followed
by an absoluteness argument to show that it is a theorem of ZFC. A direct combinatorial proof
was obtained by Galvin in \cite[Theorem 4]{G}.)

\begin{theorem}\label{GfromQ} Let $G$ be an uncountable subgroup of $\ber$.
Then the system of equations $y_n=x_n-x_{n+1}$ $(n=0,1,2,\ldots)$ is partition regular
over $G$ but not over $\beq$.\end{theorem}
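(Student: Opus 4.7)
The plan is to apply Baumgartner--Hajnal to a pair-colouring of $G\cap\ber_{>0}$ derived from $c$, but using the countable ordinal $\omega+1$ rather than $\omega$.  The extra ``bottom'' element this produces will let me express both the $x_n$'s and the $y_n$'s as positive differences between elements of the resulting homogeneous set, placing them all in the same colour class.

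Let $c$ be the given finite colouring of $G\setminus\{0\}$ and set $A=G\cap\ber_{>0}$, which is uncountable since $G$ is (using closure under negation).  Endowed with the reverse of $\ber$'s order, $A$ is uncountable and linearly ordered; the hypothesis of Baumgartner--Hajnal is easily verified, since for any $\varphi:A\to\ben$ some level set is uncountable, and any uncountable subset of $\ber$ contains an infinite decreasing real sequence (that is, an increasing sequence in the reverse order).  I would then define a pair-colouring $\chi$ on the two-element subsets of $A$ by $\chi(\{a,b\})=c(|a-b|)$.

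Applying Baumgartner--Hajnal with the countable ordinal $\omega+1$ would yield $B\subseteq A$ of order type $\omega+1$ in the reverse order on which $\chi$ is constant, say with value $\gamma$.  Written in $\ber$'s order, $B=\{b_\omega\}\cup\{b_n:n\in\omega\}$ with $b_0>b_1>b_2>\cdots>b_\omega$, and $c(b_i-b_j)=\gamma$ for every pair $b_i>b_j$ in $B$.  I would then set $x_n=b_n-b_\omega$ and $y_n=b_n-b_{n+1}$ for $n\ge 0$.  Each is a positive difference in $B$, so lies in $G\setminus\{0\}$ and has colour $\gamma$; a trivial check gives $y_n=x_n-x_{n+1}$.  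Thus the entire solution lies in the colour class $c^{-1}(\gamma)$, establishing partition regularity over $G$; non-partition-regularity over $\beq$ is the result from \cite{HLS} recalled in the introduction.

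The main obstacle is seeing that the more natural choice of ordinal $\omega$ does not suffice: it produces a decreasing sequence $b_0>b_1>\cdots$ with all positive differences coloured $\gamma$, but the obvious candidate $x_n=b_n$ has uncontrolled colour $c(b_n)$.  Taking $\omega+1$ instead provides the extra element $b_\omega$, and re-expressing the $x_n$'s as differences $b_n-b_\omega$ drops them into the $\gamma$-class alongside the $y_n$'s.
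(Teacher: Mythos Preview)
Your argument is correct and is essentially the paper's own proof: both apply Baumgartner--Hajnal with the pair-colouring $\{a,b\}\mapsto c(|a-b|)$ and target ordinal $\omega+1$, then read off $x_n$ and $y_n$ as differences against the $(\omega)$th element of the homogeneous set. The only cosmetic differences are that you pass to $G\cap\ber_{>0}$ with the reverse order (the paper keeps all of $G$ with the usual order) and that your verification of the Baumgartner--Hajnal hypothesis via ``some level set is uncountable'' is a bit slicker than the paper's $(\varphi(x),\varphi(-x))$ trick.
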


\begin{proof} It was shown in \cite{HLS} (immediately before Question 6) that
the system is not partition regular over $\beq$.  To show that the system
is partition regular over $G$, we use the Baumgartner--Hajnal theorem.
For this we need to observe that given any countable colouring of $G$,
there is a monochromatic increasing sequence.  To see this, let $\varphi:G\to\ben$ and
define $\psi:G\to\ben\times\ben$ by $\psi(x)=\big(\varphi(x),\varphi(-x)\big)$.
Pick $(n,m)\in\ben\times\ben$ such that $A=\psi^{-1}[\{(n,m)\}]$ is infinite.
Then $A$ contains a sequence $\langle x_t\rangle_{t=1}^\infty$ which is
either increasing or decreasing. If $\langle x_t\rangle_{t=1}^\infty$ is
increasing, then it is an increasing sequence in $\varphi^{-1}[\{n\}]$.
If $\langle x_t\rangle_{t=1}^\infty$ is
decreasing, then $\langle -x_t\rangle_{t=1}^\infty$ is an increasing sequence in $\varphi^{-1}[\{m\}]$.

Now let $G$ be finitely coloured by $\varphi$ and, given a two-element subset $\{x,y\}$ of $G$,
define $\psi(\{x,y\})=\varphi(|x-y|)$. By the Baumgartner--Hajnal theorem, pick
an increasing sequence $\langle z_\sigma\rangle_{\sigma<\omega+1}$ such
that $\psi$ is constant on $\big\{\{z_\sigma,z_\tau\}:\sigma<\tau\big\}$.  Given
$n\in\ben$, let $x_n=z_\omega-z_n$ and let $y_n=z_{n+1}-z_n$.\end{proof}

Perhaps even more interesting would be to understand what happens for
subgroups of $\beq$. The following is the obvious question to ask.

\begin{question}\label{othersubgroups} If $G$ and $H$ are subgroups of $\beq$
such that $G$ does not contain a subgroup isomorphic to $H$, must there exist
a system (of linear equations with integer coefficients) that is partition regular over $H$ but not over $G$?
\end{question}

It is easy to check that every subgroup of $\beq$ that contains $1$ is the set of rationals $a/b$ such that the multiplicity of $p_i$ in the prime factorisation of $b$ is at most $k_i$, where each $k_i$ is either a non-negative integer or $\infty$.  Given two such sequences $k$ and $k'$, if there is some $i$ for which $k_i = \infty$ while $k'_i$ is finite, then the corresponding groups can be separated by the methods of this section.  But if for every $i$, either both $k_i$ and $k'_i$ are infinite, or both are finite, then we are unable to say anything.

The most attractive special case is surely the following.

\begin{question}\label{squarefree} Does there exist a system (of linear equations
with integer coefficients) that is partition
regular over the set of rationals with squarefree denominators but is not
partition regular over the integers?
\end{question}


\begin{thebibliography}{9}

\bibitem{BHL} B. Barber, N. Hindman, and I. Leader, {\it Partition regularity in the rationals\/}, 
J. Comb.\ Theory (Series A) {\bf 120} (2013), 1590--1599.

\bibitem{BHLS} B. Barber, N. Hindman, I. Leader, and D. Strauss, {\it Partition regularity without the columns property \/}, 
Proc.\ Amer.\ Math.\ Soc., to appear\footnote{Currently available at 
{\tt http://nhindman.us/preprint.html}.}.

\bibitem{BH} J.\ Baumgartner and A.\ Hajnal, {\it A proof
(involving Martin's Axiom) of a partition relation\/},
Fund.\ Math.\ {\bf 78} (1973), 193--203.

\bibitem{G} F.\ Galvin, {\it On a partition theorem of
Baumgartner and Hajnal\/}, Colloq.\ Math.\ Soc.\ J\'anos Bolyai,
Vol.\ 10, North Holland, Amsterdam, 1975, 711-729.

\bibitem{H} N. Hindman, {\it Partition regularity of matrices\/}, 
Integers {\bf 7(2)} (2007), A-18.
{\tt http://www.integers-ejcnt.org/vol7-2.html} 

\bibitem{HLS} N. Hindman, I. Leader, and D. Strauss,
{\it Open problems in partition regularity\/},
Combin.\ Probab.\ Comput.\ {\bf 12} (2003), 571--583.

\bibitem{HS} N. Hindman and D. Strauss,
{\it Algebra in the Stone--\v Cech compactification: theory and applications, 2nd edition\/},
Walter de Gruyter \& Co., Berlin, 2012.

\bibitem{R} R. Rado, {\it Studien zur Kombinatorik\/},
Math.\ Z. {\bf 36} (1933), 424--470.

\bibitem{Rb} R.\ Rado, {\it Note on combinatorial analysis\/}, Proc.\
London Math.\ Soc.\ {\bf 48} (1943), 122--160.



\end{thebibliography}
\end{document}